\documentclass[10pt,reqno]{amsart}
\usepackage{amsmath}
\usepackage{amsfonts}
\usepackage{amssymb}
\usepackage{amsthm}
\usepackage{amscd}

\usepackage{a4wide}
\parskip=3ex

\newcommand{\Section}[1]{\section{#1} \setcounter{equation}{0}}

\newtheorem{thm}{Theorem}
\newtheorem{lem}{Lemma}[section]

\theoremstyle{remark}
\newtheorem*{rem}{Remark}

\theoremstyle{definition}

\begin{document}
\large
\title{On 
a form of degree $d$ in $2d+1$ variables ($d\geq 4$)}
\author{Manoj Verma}
\address{MANOJ VERMA: Institute of Mathematical Sciences,
CIT Campus, Taramani, Chennai 600113, India}
\email{mverma@imsc.res.in}

\begin{abstract} For $k\geq 2$, we derive an asymptotic formula for the number of zeros of the forms
\begin{center}
$\prod_{i=1}^{k}(x_{2i-1}^2+x_{2i}^2)+\prod_{i=1}^{k}(x_{2k+2i-1}^2+x_{2k+2i}^2)-x_{4k+1}^{2k}$
\end{center}
and
\begin{center}
$x_1\prod_{i=1}^{k}(x_{2i}^2+x_{2i+1}^2)+x_{2k+2}\prod_{i=1}^{k}(x_{2k+2i+1}^2+x_{2k+2i+2}^2)-x_{4k+3}^{2k+1}$
\end{center} in the box $1\leq x_i\leq P$ using the circle method.\\\\
Mathematics Subject Classification 2010: Primary 11D45; Secondary 11D85, 11P55.
\end{abstract}
\maketitle
\Section {Introduction}
\label{introduction}
\noindent The problems of representation of positive integers and of non-trivial
representation of zero by forms with integral coefficients have attracted attention of many mathematicians since the beginning of the 20th century. Most of the work in this area has been carried out using the circle method. The square-root barrier implies that the circle method is unlikely to yield an asymptotic formula in case of a form  of degree $d$ with less than $2d+1$ variables. In 1962 Birch, Davenport and Lewis \cite{BDL} found an asymptotic formula for the number representations of zero in a box $|x_i|\leq P$ by the cubic forms in seven variables 
of the type
$$N_1(x_1,x_2,x_3)+N_2(x_4,x_5,x_6)+a_7x_7^3$$ where $N_1$ and $N_2$ are norm
forms of some cubic field extensions of $\mathbb Q$ and $a_7$ is a nonzero integer. Incidentally, whereas an asymptotic formula for the number of representations of an integer as a sum of eight cubes has been established by Vaughan \cite{Vaughan1}, an asymptotic formula in the case of a sum of seven cubes seems to be out of reach as of now. However, a lower bound of the expected order of magnitude has been established by Vaughan \cite{Vaughan3}. 
A few more families of cubic forms in seven variables have been dealt with. 
However, for $d \geq 4$ no example of an asymptotic formula for the number of representations of either zero or of large positive integers by a form of degree $d$ in $2d+1$ variables seems to exist in the literature. In this paper, for each $d\geq 4$, we establish an asymptotic formula for the number of zeros of a particular form of degree $d$ in $2d+1$ variables in a suitable expanding box using the circle method. It turns out that our proof goes through in cases $d=1, 2, 3$ as well and retrieves previously known results in those cases. For $d$ even, say $d=2k$, $k\geq 1$, we look at the number zeros of the form
\begin{equation*} \label{e:one1e}
f({\bf x})=f(x_1, \ldots, x_{4k+1})=\prod_{i=1}^{k}(x_{2i-1}^2+x_{2i}^2)+\prod_{i=1}^{k}(x_{2k+2i-1}^2+x_{2k+2i}^2)-x_{4k+1}^{2k}
\end{equation*}
while for $d$ odd, say $d=2k+1$, $k\geq 0$, we look at the number of zeros of the form
\begin{equation*} \label{e:one1o}
f({\bf x})=
f(x_1, \ldots, x_{4k+3})=
x_1\prod_{i=1}^{k}(x_{2i}^2+x_{2i+1}^2)+x_{2k+2}\prod_{i=1}^{k}(x_{2k+2i+1}^2+x_{2k+2i+2}^2)-x_{4k+3}^{2k+1}
\end{equation*}
in a box $1\leq x_i\leq P$.
When convenient we shall abbreviate and write
\begin{equation} \label{e:one1}
f({\bf x})=
f(x_1, \ldots, x_{2d+1})=
f_d(x_1, \ldots, x_d)+f_d(x_{d+1}, \ldots, x_{2d})-x_{2d+1}^d
\end{equation}
where
\begin{equation*}
f_d(x_1, \ldots, x_d)=\left\{ \begin{array}{ll}
\prod_{i=1}^{k}(x_{2i-1}^2+x_{2i}^2) & \mbox{ if $d$ is even, $d=2k$, $k\geq 1$}\\
x_1\prod_{i=1}^{k}(x_{2i}^2+x_{2i+1}^2) & \mbox{ if $d$ is odd, $d=2k+1$, $k\geq 0$}
\end{array}
\right..
\end{equation*}
We could define $f_d=f_d(x_1, \ldots, x_d)$ recursively: $f_1(x_1)=x_1$, $f_2(x_1, x_2)=x_1^2+x_2^2$, and $f_d(x_1, \ldots, x_d)=(x_{d-1}^2+x_{d}^2)\cdot f_{d-2}(x_1, \ldots, x_{d-2})$ for $d\geq 2.$\\
\indent Taking $x_{d-2}=x_{2d-2}=x_{d-4}=x_{2d-4}=\ldots=1$,\, $x_{d-3}=x_{2d-3}=x_{d-5}=x_{2d-5}=\ldots =0$ and applying the four square theorem we see that this form represents every integer infinitely many times; our interest lies in establishing the asymptotic formula for the number of representations of zero with $1\leq x_i\leq P$. Only minor changes are required to prove the expected asymptotic formula for the number of representations of a large positive integer $N$ by this form (or by the form obtained by changing the $-$ sign in the front of $x_{2d+1}^{d}$ to $+$) with the variables restricted so that $1\leq x_i \leq \lfloor N^{\frac{1}{d}} \rfloor $.\\
\indent Note that if $d\geq 3$ then the form $f$ is singular (e.g., consider the point (1, 0, 0, \ldots, 0)) (so Deligne's estimate for complete exponential sums is not applicable) and its $h$-invariant is 3. (The $h$-invariant of a form $F$, denoted $h(F)$, is defined as the smallest integer $h$ such that $F$ can be written in the form
$$F({\bf x})=A_1({\bf x})B_1({\bf x})+\ldots+A_h({\bf x})B_h({\bf x})$$
where $A_i({\bf x})$ and $B_i({\bf x})$ are forms of positive degrees having rational coefficients.)\\
\indent In what follows, $\varepsilon$ can take any positive real value in any statement in which it appears, $e(\alpha)=e^{2\pi i \alpha}$, the symbols $\ll$ and $O$ have their usual meanings with the implicit constants depending at most on $\varepsilon$ and $d$. If $A$ is a set, $\#A$ denotes the cardinality of $A.$ If $n, m$ are positive integers, $d(n)$ denotes the number of positive divisors of $n$ and $d_m(n)$ denotes the number of (ordered) factorizations of $n$ with $m$ positive integral factors. For a real number $x$, $\lfloor x \rfloor$ and $\lceil x \rceil$ denote the greatest integer less than or equal to $x$ and the smallest integer greater than or equal to $x$ respectively.
\begin{thm} \label{t:one1}
The number of zeros  $R(0;P)$ of the form $f$ in (\ref{e:one1}) with $1\leq x_i \leq P$ satisfies
\begin{equation*}
R(0;P)=P^{d+1}\mathfrak{S}J+O(P^{d+1-\frac{1}{1+5\cdot 2^{d-1}}+\varepsilon})
\end{equation*}
where $${\mathfrak{S}}=\sum_{q=1}^{\infty}\sum_{\substack{a=1\\(a,q)=1}}
^{q}
q^{-2d-1}S(q,a) \mbox{ with }S(q,a)=\sum_{{\bf z} \bmod q} e\left( \frac{a}{q}f({\bf z})\right),$$
and
$$J=\int_{-\infty}^{\infty}\left( \int_{[0,1]^{2d+1}}e(\gamma f({\bf \xi}))\,d{\bf
\xi} \right) d\gamma;$$
$\mathfrak{S}$ and $J$ are positive.
\end{thm}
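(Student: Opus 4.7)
The plan is to apply the Hardy-Littlewood circle method. Define
$$F(\alpha) = \sum_{1\le y_1,\dots,y_d\le P} e(\alpha f_d(\mathbf y)),\qquad G(\alpha) = \sum_{1\le x\le P} e(\alpha x^d).$$
Because the three blocks of variables in $f$ are disjoint, $R(0;P) = \int_0^1 F(\alpha)^2\, G(-\alpha)\, d\alpha$. Fix a parameter $Q = P^{\eta}$, let $\mathfrak M = \bigcup_{(a,q)=1,\, q\le Q}\{|\alpha-a/q|\le q^{-1}P^{-d+1}\}$ be the major arcs, and take $\mathfrak m$ to be the complement of $\mathfrak M$ in $[Q^{-1}P^{-d+1},\, 1+Q^{-1}P^{-d+1}]$.

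The structural observation driving every estimate on $F$ is that, writing $n_i = y_{2i-1}^2+y_{2i}^2$ (and, for $d$ odd, incorporating the extra linear factor), one has $F(\alpha) = \sum_m a_d(m;P)\,e(\alpha m)$, where $a_d(\,\cdot\,;P)$ is a $k$-fold Dirichlet convolution of the two-squares counts $r_2(\,\cdot\,;P)$. The divisor bounds $r_2(n),\, d_k(n)\ll n^\varepsilon$ then convert all moment estimates for $F$ into divisor sums; in particular, counting pairs $(\mathbf y,\mathbf z)\in[1,P]^{2d}$ with $f_d(\mathbf y)=f_d(\mathbf z)$ gives $\int_0^1|F|^2\,d\alpha \ll P^{d+\varepsilon}$. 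Combined with Weyl's inequality $\sup_{\mathfrak m}|G|\ll P^{1-2^{1-d}+\varepsilon}$, this yields the minor-arc bound
$$\int_{\mathfrak m}|F|^2|G|\,d\alpha \;\ll\; P^{d+1-2^{1-d}+\varepsilon},$$
which is already stronger than the target error, so the minor arcs will not be the bottleneck.

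On each major arc $\mathfrak M(q,a)$ I would establish the standard approximations, with $\beta=\alpha-a/q$ and $v,V$ the natural one- and $d$-dimensional oscillatory integrals,
$$G(\alpha) = q^{-1}S_G(q,a)v(\beta)+O\!\bigl(q(1+P^d|\beta|)\bigr), \quad F(\alpha) = q^{-d}S_F(q,a)V(\beta)+O\!\bigl(qP^{d-1}(1+P^d|\beta|)\bigr),$$
obtained by separating the variables into residue classes modulo $q$ and replacing inner sums by integrals. Substituting into $F^2 G(-\alpha)$, integrating over $\mathfrak M$, and completing the sum over $(q,a)$ and the integral in $\beta$ produce the main term $P^{d+1}\mathfrak S J$ together with (i) singular-series tails of the shape $\sum_{q>Q} q^{-2d-1}|S_F(q,a)|^2|S_G(q,a)|$, which demand a Weyl-shape pointwise estimate $|S_F(q,a)|\le q^{d-\delta+\varepsilon}$ proved by multiplicativity followed by Gauss- and Jacobi-sum evaluations at prime powers (peeling off one factor $x_{2i-1}^2+x_{2i}^2$ at a time), and (ii) cross-term contributions from the $O(\cdot)$ remainders above. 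Choosing $Q=P^{\eta}$ to balance these against the minor-arc contribution should yield the stated exponent $1/(1+5\cdot 2^{d-1})$. Positivity of $J$ follows from exhibiting a non-singular real zero of $f$ in the interior of $[0,P]^{2d+1}$ (perturbing a convenient trivial zero such as the one used in the introduction); positivity of $\mathfrak S=\prod_p\sigma_p$ follows from Hensel-type lower bounds on the $p$-adic densities $\sigma_p$, exploiting the abundance of sum-of-two-squares representations modulo prime powers.

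\textbf{Main obstacle.} The crux of the argument is the bound on the local sum $S_F(q,a)$. Because $f$ is singular, Deligne-type estimates for complete exponential sums are unavailable; and because $f_d$ couples the variables multiplicatively (rather than diagonally), $S_F$ does not split as a product of univariate Gauss sums. Obtaining a Weyl-strength bound with the right constant is the technically most delicate step, and is precisely what produces the specific number $5\cdot 2^{d-1}$ appearing in the error exponent.
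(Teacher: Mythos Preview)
Your outline follows the same circle-method architecture as the paper (minor arcs via $\int_0^1|F|^2\,d\alpha\ll P^{d+\varepsilon}$ together with Weyl on $G$, major arcs via the standard local approximation, singular series via multiplicativity and Gauss-sum evaluations of $S_F$), so the overall strategy is right. However, two points are misdiagnosed and would derail the proof as written.

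First, your minor-arc estimate is wrong. Weyl's inequality does not give $\sup_{\mathfrak m}|G|\ll P^{1-2^{1-d}+\varepsilon}$ uniformly; it gives $|G(\alpha)|\ll P^{1+\varepsilon}(q^{-1}+P^{-1}+qP^{-d})^{1/2^{d-1}}$ when $|\alpha-a/q|\le q^{-2}$, so on the minor arcs (where one only knows $q>Q=P^{\eta}$) the saving is $P^{-\eta/2^{d-1}}$, not $P^{-1/2^{d-1}}$. Hence the minor arcs contribute $P^{d+1+\varepsilon-\eta/2^{d-1}}$, which \emph{is} the bottleneck. Relatedly, your major arcs have width $q^{-1}P^{-d+1}$; with that choice $P^d|\beta|$ can be as large as $P/q$, so the error terms $O(q(1+P^d|\beta|))$ and $O(qP^{d-1}(1+P^d|\beta|))$ in your approximations for $G$ and $F$ are of the same size as the main terms and give nothing. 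The paper instead takes arcs of fixed width $P^{\delta-d}$ with $q\le P^{\delta}$; then the approximation error in $F(\alpha)^2G(-\alpha)$ is $O(P^{2d+2\delta})$, and after integrating over $\mathfrak M$ the total major-arc error is $O(P^{d+5\delta})$.

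Second, the exponent $1/(1+5\cdot2^{d-1})$ does not come from the estimate on $S_F$. It arises from balancing the minor-arc error $P^{d+1+\varepsilon-\delta/2^{d-1}}$ against the major-arc error $P^{d+5\delta}$, i.e.\ solving $1-\delta/2^{d-1}=5\delta$, which gives $\delta=2^{d-1}/(1+5\cdot2^{d-1})$. The bound on $S_F$ (namely $|S_F(q,a)|\ll q^{d-1+\varepsilon}$, obtained exactly as you indicate by peeling off factors $x_{2i-1}^2+x_{2i}^2$ and evaluating Gauss sums at prime powers) is needed only to make $\mathfrak S$ converge absolutely with a tail of size $Q^{-1/d+\varepsilon}$, which is dominated by the other errors; it is delicate because $f$ is singular, but it is not what determines the final exponent.
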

\begin{rem}
One can also consider the problem of finding an asymptotic formula for the number of zeros of the form $f$ inside the box $|x_i|\leq P$ but then the contribution from those ``degenerate" zeros of $f$ that lie inside the box $|x_i|\leq P$ and also in the set
\begin{equation*}
U=\{{\bf x}\in \mathbb{Z}^{2d+1}: f_d(x_1, \ldots, x_d)=f_d(x_{d+1}, \ldots, x_{2d})=x_{2d+1}= 0\}
\end{equation*}
would be $2^{2d-2}P^{2d-2}+O_{d}(P^{2d-3})$ in case $d$ is odd and $2^{2d-6}d^2P^{2d-4}+O_{d}(P^{2d-5})$ in case $d$ is even and thus the contribution from the zeros of $f$ with at least one of the $x_i$ equal to zero would be $\gg P^{d+2}$ if $d\geq 5$ and cannot be determined precisely via the circle method. On the other hand, the number of zeros of $f$ with $|x_i|\leq P$ but none of the $x_i$ equal to zero is $2^{2d+1}$ times the number of zeros with $1\leq x_i\leq P$ in case $d$ is even, and is $2^{2d-1}$ times the sum of the number of zeros with $1\leq x_i\leq P$ of the three forms
$$f_d(x_1, \ldots, x_d)+f_d(x_{d+1}, \ldots, x_{2d})-x_{2d+1}^d,$$
$$-f_d(x_1, \ldots, x_d)+f_d(x_{d+1}, \ldots, x_{2d})-x_{2d+1}^d$$
and
$$f_d(x_1, \ldots, x_d)-f_d(x_{d+1}, \ldots, x_{2d})-x_{2d+1}^d;$$
theorem 1 applies to the latter two forms as well.
\end{rem}
\Section{The Minor Arcs}
\label{minorarcs}
\noindent Let $P$ be a large positive integer, $I=\{1, 2, \ldots, P\}$, $B=I^{2d+1}$ and
\begin{equation} \label{e:two1}
F(\alpha)= \sum_{{\bf x}\in B} e(\alpha f({\bf x})).
\end{equation}
Then the number of zeros of the form $f$ inside the box $1\leq x_i\leq P$ is
\begin{equation} \label{e:two2}
R(0;P)=\int_{0}^{1} F(\alpha)\,d\alpha=\int_{0}^{1} F_1(\alpha)^2 F_2(\alpha)\,d\alpha
\end{equation}
where
$$F_1(\alpha)=\sum_{x_1, \ldots, x_{d} \in I} e(\alpha f_d(x_1, \ldots, x_{d}))$$
and
$$F_2(\alpha)=\sum_{x \in I} e(-\alpha x^{d}).$$
\indent Let $\delta$ be a sufficiently small positive real number to be specified 
later. For $1 \leq q \leq P^{\delta},\; 1 \leq a \leq q,\; (a,q)=1,$ define the major arcs $\mathfrak{M}(q,a)$ by
\begin{equation*}
\mathfrak{M}(q,a)=\left\{\alpha:\left| \alpha
- \frac{a}{q}\right| \leq P^{\delta-d}\right\}
\end{equation*}
and let $\mathfrak{M}$ be the union of the $\mathfrak{M}(q,a)$. The set $\mathfrak{m}=(P^{\delta - d}, 1+P^{\delta - d}]\backslash \mathfrak{M}$ forms the minor arcs.
\begin{lem} \label{l:two1} We have
\begin{equation*}
\int_{0}^{1} \left| F_1(\alpha)\right|^2 d\alpha \ll P^{d+\varepsilon}.
\end{equation*}
\end{lem}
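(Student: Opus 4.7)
The plan is to apply Parseval, reducing the $L^2$ estimate to a counting problem about values of $f_d$, and then to bound that count via standard divisor and sum-of-two-squares estimates.

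By orthogonality,
\[
\int_0^1 |F_1(\alpha)|^2\, d\alpha = \#\{({\bf x},{\bf y}) \in I^d \times I^d : f_d({\bf x}) = f_d({\bf y})\} = \sum_{n=1}^{\infty} r(n)^2,
\]
where $r(n) = \#\{{\bf x} \in I^d : f_d({\bf x}) = n\}$. Since $\sum_n r(n) = P^d$ trivially, it will suffice to prove a uniform pointwise bound $r(n) \ll P^\varepsilon$; the desired estimate will then follow from $\sum_n r(n)^2 \leq (\max_n r(n))\sum_n r(n) \ll P^\varepsilon \cdot P^d$.

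For the pointwise bound I would exploit the multiplicative structure of $f_d$ provided by the recursion $f_d = (x_{d-1}^2 + x_d^2)\, f_{d-2}$. In the even case $d = 2k$, any tuple ${\bf x}$ with $f_d({\bf x}) = n$ is specified by an ordered factorization $n = n_1 \cdots n_k$ together with a choice of representation $n_i = x_{2i-1}^2 + x_{2i}^2$ for each $i$, so
\[
r(n) \leq \sum_{n_1\cdots n_k = n} \prod_{i=1}^{k} r_2(n_i) \ll_\varepsilon d_k(n)\, n^\varepsilon \ll n^\varepsilon,
\]
using the classical bounds $r_2(m) \ll m^\varepsilon$ and $d_k(m) \ll m^\varepsilon$. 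The odd case $d = 2k+1$ is treated the same way, with one extra divisor sum over the value of $x_1 \mid n$ contributing only an additional $d(n) \ll n^\varepsilon$ factor. Since $f_d({\bf x}) = O(P^d)$ on $I^d$, the bound $r(n) \ll n^\varepsilon$ immediately translates to $r(n) \ll P^\varepsilon$.

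The main step is this pointwise bound on $r(n)$, but because $f_d$ factors so cleanly via its recursion, the bound reduces directly to classical estimates on $d_k$ and $r_2$; there is no essential obstacle beyond a brief case split on the parity of $d$. No minor-arc input is required for this lemma.
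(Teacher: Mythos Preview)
Your proposal is correct and follows essentially the same route as the paper: Parseval reduces to $\sum_n r(n)^2$, a pointwise bound $r(n)\ll n^\varepsilon$ via ordered factorizations and the classical $r_2(m)\ll m^\varepsilon$ estimate, and then $\sum r(n)^2\le(\max r(n))\sum r(n)\ll P^{\varepsilon}\cdot P^d$. The only cosmetic difference is that you spell out the odd case explicitly, whereas the paper treats $d=2k$ and remarks that the odd case is similar.
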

\begin{proof} We assume that $d$ is even, $d=2k$, $k\geq 1$; the proof is similar in case $d$ is odd. We have 
\begin{equation*}
\int_{0}^{1}|F_1(\alpha)|^2 d\alpha
=\sum_{0\leq n \leq 2^kP^{d}}a(n)^2
\end{equation*}
where \begin{equation*}
a(n) = \#\{(x_1, \ldots, x_{d})\in I^{d}: f_d(x_1, \ldots, x_{d})=n\} \mbox{ for }
n\geq 0.
\end{equation*}
Clearly $a(0)=1$ and $\sum_{0\leq n \leq 2^kP^{d}}a(n)=P^{d}$. For $n\geq 1$, in any solution of $\prod_{i=1}^{k}(x_{2i-1}^2+x_{2i}^2)=n$ we must have $x_{2i-1}^2+x_{2i}^2=n_i$ ($1\leq i\leq k$) for some factorization $n=n_1\ldots n_k$ of $n$ with $k$ (positive) factors. For a positive integer $r$, the number of solutions of the equation $s^2+t^2=r$ in non-negative integers  $s, t$ is $\leq 8d(r) \ll r^{\varepsilon}$. Thus, for $1 \leq n\leq 2^kP^{d}$ we have
\begin{equation*} a(n)\ll \sum_{n_1\ldots n_k=n}n_1^{\varepsilon}\ldots n_k^{\varepsilon}=d_{k}(n)n^{\varepsilon}\ll n^{\varepsilon}n^{\varepsilon}\ll P^{2d\varepsilon}.
\end{equation*}
Hence
\begin{equation*}
\int_{0}^{1} |F_1(\alpha)|^2 d\alpha = 1+\sum_{1\leq n \leq 2^kP^{d}}a(n)^2
\ll 1+P^{2d\varepsilon}\sum_{1\leq n \leq 2^kP^{d}}a(n)\ll P^{d+2d\varepsilon}.
\qedhere
\end{equation*}
\end{proof}
\begin{lem} \label{l:two2}
We have
\begin{equation*}
\int_{\mathfrak{m}}F(\alpha)\,d\alpha \ll P^{d+1+\varepsilon-\frac{\delta}{2^{d-1}}}.
\end{equation*}
\end{lem}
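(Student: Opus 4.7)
The plan is to bound $F(\alpha) = F_1(\alpha)^2 F_2(\alpha)$ on the minor arcs by controlling $F_2$ pointwise via Weyl's inequality and $F_1$ in mean square via Lemma \ref{l:two1}. The crucial observation is that $F_2$ is a pure Weyl sum for the monomial $x^d$, so the full strength of Weyl's inequality is available.

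The first step is to show that, for every $\alpha \in \mathfrak{m}$, there is a rational approximation $a/q$ with $(a,q)=1$ and $P^{\delta} < q \le P^{d-\delta}$ satisfying $|\alpha - a/q| \le q^{-1}P^{\delta - d}$. This follows from Dirichlet's approximation theorem applied with denominator bound $P^{d-\delta}$: any such approximation has $|\alpha - a/q| \le 1/(qP^{d-\delta}) \le P^{\delta - d}$, so if additionally $q \le P^\delta$ one would have $\alpha \in \mathfrak{M}(q,a)$, contradicting $\alpha\in\mathfrak{m}$.

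Next I would apply Weyl's inequality to the degree-$d$ polynomial exponential sum $F_2(\alpha) = \sum_{x\in I} e(-\alpha x^d)$ to obtain
\begin{equation*}
F_2(\alpha) \ll P^{1+\varepsilon}\bigl(q^{-1} + P^{-1} + qP^{-d}\bigr)^{1/2^{d-1}}.
\end{equation*}
Plugging in the range $P^\delta < q \le P^{d-\delta}$ forces each of the three terms in the parenthesis to be $\le P^{-\delta}$ (assuming $\delta < d/2$), yielding the uniform pointwise bound $\sup_{\alpha\in\mathfrak{m}} |F_2(\alpha)| \ll P^{1+\varepsilon - \delta/2^{d-1}}$.

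The final step is the trivial estimate
\begin{equation*}
\left|\int_{\mathfrak{m}} F(\alpha)\,d\alpha\right| \le \sup_{\alpha \in \mathfrak{m}} |F_2(\alpha)| \int_0^1 |F_1(\alpha)|^2 \,d\alpha,
\end{equation*}
into which we substitute the Weyl bound for $F_2$ and the bound $\int_0^1 |F_1|^2 \ll P^{d+\varepsilon}$ from Lemma \ref{l:two1}; combining and relabelling $\varepsilon$ gives the claim. There is essentially no obstacle beyond verifying the range of $q$: the work of exploiting the arithmetic structure of $f_d$ has been absorbed into the divisor-style bound behind Lemma \ref{l:two1}, and $F_2$ is simple enough that classical Weyl suffices without any minor-arcs argument tailored to $f$ itself.
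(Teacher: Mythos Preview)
Your proposal is correct and follows essentially the same route as the paper: Dirichlet's theorem forces $q>P^{\delta}$ on $\mathfrak{m}$, Weyl's inequality then gives the pointwise bound $F_2(\alpha)\ll P^{1+\varepsilon-\delta/2^{d-1}}$, and the trivial $L^\infty$--$L^2$ splitting combined with Lemma~\ref{l:two1} finishes. The only minor slip is the parenthetical ``assuming $\delta<d/2$'': the middle term $P^{-1}\le P^{-\delta}$ actually requires $\delta\le 1$, but since $\delta$ is taken sufficiently small this is harmless.
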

\begin{proof}
For $\alpha \in \mathfrak{m}$,
by Dirichlet's theorem on Diophantine approximation, there exist $a, q$ with
$(a,q) =1, q\leq P^{d-\delta}$ and $\left| \alpha
- \frac{a}{q}\right| \leq q^{-1}P^{\delta-d} < q^{-2}.$ Since
$\alpha \in \mathfrak{m}\, \subseteq (P^{\delta - d},
1-P^{\delta - d})$, we must have $1\leq a < q$ and $q
>P^{\delta}$ (otherwise $\alpha$ would be in $\mathfrak{M}(q,a)$). By Weyl's
inequality (See \cite{DBook} or \cite{VaughanBook}),
$$F_2(\alpha)\ll P^{1+\varepsilon-\frac{\delta}{2^{d-1}}}.$$
This, with (\ref{e:two2}) and lemma \ref{l:two1} gives
\begin{eqnarray*}
\left| \int_{\mathfrak{m}}F(\alpha)\,d\alpha \right|
&= & \left| \int_{\mathfrak{m}}F_1^2(\alpha) F_2(\alpha)\,d\alpha \right|
\leq \left( \sup_{\alpha\in\mathfrak{m}}|F_2(\alpha)|
\right) \left( \int_{\mathfrak{m}} |F_1(\alpha)|^2\,d\alpha \right)\\
&\leq & \left( \sup_{\alpha\in\mathfrak{m}}|F_2(\alpha)| \right)
\left( \int_{0}^{1} |F_1(\alpha)|^2 d\alpha \right)
\ll P^{d+1+\varepsilon-\frac{\delta}{2^{d-1}}}. \qedhere
\end{eqnarray*}
\end{proof}
\Section{The Major Arcs}
\label{majorarcs}
\noindent To deal with the major arcs we need to obtain an approximation to
the generating function $F(\alpha)$, (\ref{e:two1}), in terms of the auxiliary functions
\begin{equation} \label{e:three1}
S(q,a)=\sum_{{\bf z} \bmod q} e\left( \frac{a}{q}f({\bf z})\right)
\end{equation}
and 
\begin{equation} \label{e:three2}
I(\beta)=\int_{[0,P]^{2d+1}}e(\beta f({\bf \xi}))\,d{\bf \xi}.
\end{equation}
\begin{lem} \label{l:three1}
For $\alpha \in \mathfrak{M}(q,a),$ writing $\alpha = (a/q)+\beta,$ we have
\begin{equation*}
F(\alpha)=q^{-2d-1} S(q,a)I(\beta)+O(P^{2d+2\delta}).
\end{equation*}
\end{lem}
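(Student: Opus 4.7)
The plan is to parameterise the summation variables by residue classes modulo $q$ and then approximate the resulting sum by the integral $I(\beta)$ via a Riemann-sum argument. First I would write $x_i = qy_i + z_i$, with ${\bf z}$ running over a complete set of residues modulo $q$ and ${\bf y}$ over the integers with $1 \leq qy_i + z_i \leq P$. Since $f$ has integer coefficients, $f(q{\bf y}+{\bf z}) \equiv f({\bf z}) \pmod{q}$, and therefore
\[
e(\alpha f({\bf x})) = e\!\left(\tfrac{a}{q} f({\bf z})\right) e\!\left(\beta f(q{\bf y}+{\bf z})\right),
\]
which decouples (\ref{e:two1}) as
\[
F(\alpha) = \sum_{{\bf z} \bmod q} e\!\left(\tfrac{a}{q} f({\bf z})\right) T({\bf z}), \qquad T({\bf z}) := \sum_{{\bf y}} e(\beta f(q{\bf y}+{\bf z})).
\]
Since the target is $q^{-2d-1} S(q,a) I(\beta)$, it suffices to show $T({\bf z}) = q^{-2d-1} I(\beta) + E({\bf z})$ with $\sum_{{\bf z}} |E({\bf z})| \ll P^{2d+2\delta}$.

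For the Riemann-sum comparison I would tile $\mathbb{R}^{2d+1}$ by cubes $C_{{\bf y},{\bf z}} = q{\bf y} + {\bf z} + [0,q)^{2d+1}$ of side $q$, one per lattice point $q{\bf y}+{\bf z}$ for fixed ${\bf z}$. Each partial derivative $\partial f/\partial \xi_i$ is a polynomial of degree $d-1$, hence bounded by $O(P^{d-1})$ on the box $[0,P]^{2d+1}$, so the mean value theorem gives
\[
\bigl| e(\beta f(q{\bf y}+{\bf z})) - e(\beta f({\bf \xi})) \bigr| \ll |\beta|\, q\, P^{d-1} \ll q P^{\delta - 1} \qquad \text{for } {\bf \xi} \in C_{{\bf y},{\bf z}},
\]
using $|\beta| \leq P^{\delta - d}$. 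For each interior cube contained in $[0,P]^{2d+1}$ this shows that the sampled value $e(\beta f(q{\bf y}+{\bf z}))$ differs from the average $q^{-2d-1} \int_{C_{{\bf y},{\bf z}}} e(\beta f({\bf \xi}))\, d{\bf \xi}$ by $O(qP^{\delta-1})$; summing over the $O((P/q)^{2d+1})$ interior cubes yields $O(q^{-2d} P^{2d+\delta})$. The boundary cubes -- those meeting $\partial [0,P]^{2d+1}$, and also accounting for the mild mismatch between the summation range $\{1,\ldots,P\}$ and the integration range $[0,P]$ -- number $O((P/q)^{2d})$, and their contribution is bounded trivially by $O(q^{-2d} P^{2d})$. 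Thus $|E({\bf z})| \ll q^{-2d} P^{2d+\delta}$ uniformly in ${\bf z}$.

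Summing over the $q^{2d+1}$ residue classes and invoking $q \leq P^{\delta}$ on $\mathfrak{M}(q,a)$ then produces
\[
\bigl| F(\alpha) - q^{-2d-1} S(q,a) I(\beta) \bigr| \ll q^{2d+1} \cdot q^{-2d} P^{2d+\delta} = q P^{2d+\delta} \leq P^{2d+2\delta},
\]
as required. The main obstacle is simply the careful accounting of the boundary cubes and the slight discrepancy between summation and integration domains; the gradient bound on $f$ and the Riemann-sum comparison itself are routine. The choices $|\beta| \leq P^{\delta-d}$ and $q \leq P^{\delta}$ built into $\mathfrak{M}(q,a)$ are precisely calibrated so that the $q^{2d+1}$ factor from summing over residues does not swamp the $q^{-2d-1}$ cancellation -- any weakening of either bound would force a worse error term here.
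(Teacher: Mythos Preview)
Your proof is correct and follows essentially the same route as the paper's own argument: decompose into residue classes modulo $q$, factor out $e(\frac{a}{q}f({\bf z}))$, and compare the inner sum to $I(\beta)$ via a Riemann-sum estimate using the gradient bound $\nabla f = O(P^{d-1})$ together with the trivial bound on boundary cubes. The only cosmetic difference is that you track the error per residue class $E({\bf z})$ and then sum, whereas the paper aggregates the error over all residue classes directly; the arithmetic and the final bound $O(P^{2d+2\delta})$ coincide.
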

\begin{proof} We have
\begin{eqnarray*}
F\left( \frac{a}{q}+\beta \right) &=& \sum_{{\bf x}\in B}e\left( \left( \frac{a}{q}+\beta \right) f({\bf x})\right)\\
&=& \sum_{{\bf z} \bmod q} \ \sum_{{\bf y}:q{\bf y}+{\bf z}\in B}e\left( \left( \frac{a}{q}+\beta \right) f(q{\bf y}+{\bf z})\right)\\
&=& \sum_{{\bf z} \bmod q} \ \sum_{{\bf y}:q{\bf y}+{\bf z}\in B}e\left( \frac{a}{q}f(q{\bf y}+{\bf z})\right) e(\beta f(q{\bf y}+{\bf z}))\\
&=& q^{-4k-1}\sum_{{\bf z} \bmod q} e\left( \frac{a}{q}f({\bf z})\right) \sum_{{\bf
y}:q{\bf y}+{\bf z}\in B}q^{2d+1}e(\beta f(q{\bf y}+{\bf z})).
\end{eqnarray*}
The inner sum is a Riemann sum of the function $e(\beta f({\bf \xi}))$ over a $2d+1$-dimensional cuboid which depends on ${\bf z}$.
We want to replace it by the integral over the $2d+1$-dimensional cube
$[0,P]^{2d+1}$ 
and estimate the error in doing so.
Firstly, any first-order partial derivative of $e(\beta f({\bf \xi}))$ is of the form $2\pi i\beta g({\bf \xi})e(\beta f({\bf \xi}))$
where $g({\bf \xi})$ is a degree $d-1$ form (in $2d+1$ real variables)
whose coefficients are bounded and which itself is, therefore, $O(P^{d-1})$ on the
cube $[0,P]^{2d+1}$.
Therefore, $e(\beta f({\bf \xi}))$ does not vary by more that $O(|\beta| q P^{d-1})$ on a cube of side $q$.
There are $q^{2d+1}$ terms in the outer sum and $\ll (P/q)^{2d+1}$ terms in each inner sum so the error in replacing the sum by the integral over the corresponding
cuboid is $O(|\beta| q P^{d-1} q^{2d+1}(P/q)^{2d+1})=O(|\beta|q P^{3d})$.
Secondly, each side of this cuboid depends on ${\bf z}$ but is within length $q$
of the sides of $[0,P]^{2d+1}$.
Since the integrand is bounded by 1 in absolute value, the error in replacing the region of integration by $[0,P]^{2d+1}$ is $O(q P^{2d})$.
Since we have $q\leq P^{\delta}$ and $|\beta|\leq P^{\delta-d}$, the total error is $O(P^{2d+2\delta})+O(P^{2d+\delta})=O(P^{2d+2\delta})$.
Hence,
$$F\left( \frac{a}{q}+\beta \right)= q^{-2d-1}\sum_{{\bf z} \bmod q} e\left( \frac{a}{q}f({\bf z})\right) \left(I(\beta)+O(P^{2d+2\delta})\right)$$
which gives the lemma. \qedhere
\end{proof}
\begin{lem} \label{l:three2}
We have
$$\int_{\mathfrak{M}} F(\alpha)\,d\alpha=P^{d+1}\mathfrak{S}(P^{\delta})J(P^{\delta})+O(P^{d+5\delta})$$
where
\begin{equation} \label{e:three3}
{\mathfrak{S}}(Q)=\sum_{q\leq Q}\sum_{
\substack{
a=1\\
(a,q)=1}
}
^{q}
q^{-2d-1}S(q,a)
\end{equation}
and 
\begin{equation} \label{e:three4}
J(\mu)=\int_{|\gamma|<\mu}\left( \int_{[0,1]^{2d+1}}e(\gamma f({\bf \xi}))\,d{\bf
\xi} \right)d\gamma.
\end{equation}
\end{lem}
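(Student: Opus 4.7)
The plan is to decompose $\int_{\mathfrak{M}}F(\alpha)\,d\alpha$ into a sum of integrals over the individual major arcs $\mathfrak{M}(q,a)$, replace $F(\alpha)$ by the approximation of Lemma~\ref{l:three1}, and then rescale the resulting $\beta$-integral to produce $J(P^{\delta})$. First I would check that the $\mathfrak{M}(q,a)$ are pairwise disjoint: if $a/q\neq a'/q'$ with $q,q'\leq P^{\delta}$, then $|a/q-a'/q'|\geq (qq')^{-1}\geq P^{-2\delta}$, which exceeds $2P^{\delta-d}$ once $\delta<d/3$. This justifies writing
\begin{equation*}
\int_{\mathfrak{M}}F(\alpha)\,d\alpha=\sum_{q\leq P^{\delta}}\sum_{\substack{a=1\\(a,q)=1}}^{q}\int_{|\beta|\leq P^{\delta-d}}F\!\left(\frac{a}{q}+\beta\right)d\beta.
\end{equation*}

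Next I would apply Lemma~\ref{l:three1}, which replaces each inner integrand by $q^{-2d-1}S(q,a)I(\beta)$ with pointwise error $O(P^{2d+2\delta})$. Each arc has length $2P^{\delta-d}$, and the number of pairs $(q,a)$ with $q\leq P^{\delta}$ and $(a,q)=1$ is at most $\sum_{q\leq P^{\delta}}q\ll P^{2\delta}$. Hence the aggregate error is
\begin{equation*}
\ll P^{2\delta}\cdot P^{\delta-d}\cdot P^{2d+2\delta}=P^{d+5\delta},
\end{equation*}
which matches the claimed remainder.

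It remains to put the surviving main term in the stated form. Since $f$ is homogeneous of degree $d$, the substitution $\xi=P\eta$ in (\ref{e:three2}) gives
\begin{equation*}
I(\beta)=P^{2d+1}\int_{[0,1]^{2d+1}}e(\beta P^{d}f(\eta))\,d\eta,
\end{equation*}
and then $\gamma=P^{d}\beta$ transforms $\int_{|\beta|\leq P^{\delta-d}}I(\beta)\,d\beta$ into $P^{d+1}J(P^{\delta})$ by (\ref{e:three4}). Summing the factor $q^{-2d-1}S(q,a)$ against this over the truncated range $q\leq P^{\delta}$ and coprime $a$ yields $\mathfrak{S}(P^{\delta})$ by (\ref{e:three3}), giving the main term $P^{d+1}\mathfrak{S}(P^{\delta})J(P^{\delta})$. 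The argument is essentially bookkeeping on top of Lemma~\ref{l:three1}; the only point requiring genuine care is the error accounting, namely verifying that the pointwise error $P^{2d+2\delta}$, integrated over a set of total measure $\ll P^{3\delta-d}$, contributes exactly $P^{d+5\delta}$ and not a larger power, so that the choice of $\delta$ later can be optimized against the minor arc bound of Lemma~\ref{l:two2}.
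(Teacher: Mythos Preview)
Your proof is correct and follows essentially the same route as the paper: decompose $\mathfrak{M}$ into its constituent arcs, invoke Lemma~\ref{l:three1} on each, aggregate the $O(P^{2d+2\delta})$ pointwise error over total measure $\ll P^{3\delta-d}$ to obtain $O(P^{d+5\delta})$, and then rescale $I(\beta)$ via homogeneity and the substitution $\gamma=P^{d}\beta$ to produce $P^{d+1}J(P^{\delta})$. The only addition is your explicit verification that the major arcs are disjoint for $\delta<d/3$, which the paper leaves implicit.
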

\begin{proof} Note that
${\mathfrak{M}}(q,a)=[(a/q)-P^{\delta-d},(a/q)+P^{\delta-d}]$ has length $2P^{\delta-d}$.
The change of variable \mbox{$\alpha =(a/q)+\beta$} and lemma \ref{l:three1} give
$$\int_{{\mathfrak{M}}(q,a)} F(\alpha)\,d\alpha=
 q^{-2d-1}S(q,a)\int_{-P^{\delta-d}}^{P^{\delta-d}}I(\beta)\,d\beta+O(P^{d+3\delta}).$$
Summing over $1 \leq q\leq P^{\delta},\; 1 \leq a \leq q,\; (a,q)=1,$
we get
$$\int_{\mathfrak{M}} F(\alpha) \,d\alpha=\mathfrak{S}(P^{\delta})\int_{-P^{\delta-d}}^{P^{\delta-d}}I(\beta)
\,d\beta+O(P^{d+5\delta}).$$
Applying the change of variable $\gamma=P^{d}\beta$, the integral in the above
equation becomes
$$P^{-d}\int_{-P^{\delta}}^{P^{\delta}}I(P^{-d}\gamma)\,d\gamma.$$
Since $f$ is a homogeneous function of degree $d$, the change of variable ${\bf \xi} '=P^{-1}{\bf \xi}$ gives
\begin{eqnarray*}
(P^{-d}\gamma)&=&\int_{[0,P]^{2d+1}}e(\gamma P^{-d} f({\bf \xi}))\,d{\bf \xi}
=\int_{[0,P]^{2d+1}}e(\gamma f({P^{-1}\bf \xi}))\,d{\bf \xi}\\
&=& P^{2d+1}\int_{{[0,1]^{2d+1}}}e(\gamma
 f({\bf \xi '}))\,d{\bf \xi '}
\end{eqnarray*}
which completes the proof of the lemma.
\end{proof}
\Section{The Singular Integral}
\label{singularintegral}
\noindent Note that the inner integral appearing in the definition of $J(\mu)$,
(\ref{e:three4}), is bounded trivially by 1.
Applying the change of variable ${\bf \xi'} = \gamma^{1/2k}{\bf \xi}$ we see that
$$\int_{[0,1]^{2d+1}}e(\gamma
f({\bf \xi}))\,d{\bf \xi}= \gamma^{-\frac{2d+1}{d}}\int_{\gamma^{\frac{1}{d}}[0,1]^{2d+1}}e(f({\bf
\xi'}))\,d{\bf \xi'}.$$
We think of the last integral as an iterated integral and apply Lemma 7.3 of
Vaughan \cite{VaughanBook} to each iteration to get the bound
\begin{equation} \label{e:four1}
\int_{[0,1]^{2d+1}}e(\gamma f({\bf \xi}))d{\bf \xi} \ll \mbox{min}
(1,\gamma^{-\frac{2d+1}{d}}).
\end{equation}
\begin{lem} \label{l:four1}
For $\mu\geq 1$,
\begin{equation*}
J(\mu) = J+O(\mu^{-\frac{d+1}{d}})
\end{equation*}
where
\begin{equation*}
J=\int_{-\infty}^{\infty}\left( \int_{[0,1]^{2d+1}}e(\gamma f({\bf \xi}))\,d{\bf
\xi} \right)d\gamma >0.
\end{equation*}
\end{lem}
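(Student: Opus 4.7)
The plan is to separate the two claims. For the asymptotic, set $I_0(\gamma) := \int_{[0,1]^{2d+1}} e(\gamma f({\bf \xi}))\, d{\bf \xi}$, so that $J(\mu) = \int_{-\mu}^{\mu} I_0(\gamma)\, d\gamma$. The bound $(\ref{e:four1})$ gives $|I_0(\gamma)| \ll \min(1, |\gamma|^{-(2d+1)/d})$, and since $(2d+1)/d = 2 + 1/d > 1$ this places $I_0 \in L^1(\mathbb{R})$; hence $J = \int_{\mathbb{R}} I_0(\gamma)\, d\gamma$ converges absolutely. For $\mu \geq 1$,
$$|J - J(\mu)| \leq \int_{|\gamma|\geq\mu} |I_0(\gamma)|\, d\gamma \ll \int_\mu^\infty \gamma^{-(2d+1)/d}\, d\gamma \ll \mu^{-(d+1)/d},$$
which is the advertised estimate.

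For the positivity $J > 0$ I would identify $J$ with the real density of zeros of $f$ in the unit cube by a standard Fourier/coarea argument. Take a nonnegative even $\phi \in C_c^\infty(\mathbb{R})$ with $\int\phi = 1$ and set $\phi_\eta(y) = \eta^{-1}\phi(y/\eta)$, whose Fourier transform is $\hat\phi(\eta\gamma)$. Fourier inversion and Fubini give
$$\int_{[0,1]^{2d+1}} \phi_\eta(f({\bf \xi}))\, d{\bf \xi} = \int_{\mathbb{R}} \hat\phi(\eta\gamma)\, I_0(\gamma)\, d\gamma.$$
Letting $\eta\to 0^+$, the right-hand side tends to $J$ by dominated convergence (via $|\hat\phi(\eta\gamma)|\leq 1$ and $I_0\in L^1$), while the left-hand side, by the coarea formula, tends to $\sigma_\infty := \int_{\{f=0\}\cap[0,1]^{2d+1}} dS/|\nabla f|$, the singular locus being of lower dimension and contributing zero. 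Hence $J = \sigma_\infty \geq 0$, and $J > 0$ will follow as soon as $f$ possesses a non-singular real zero in the open cube.

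To produce such a zero, set $k = \lfloor d/2 \rfloor$; a direct check shows $f_d(t,\ldots,t) = 2^k t^d$ in both parities, so if one chooses any $t \in (0, 2^{-(k+1)/d})$ and sets $\xi_i^* = t$ for $i \leq 2d$ and $\xi_{2d+1}^* = 2^{(k+1)/d} t$, then $\xi^* \in (0,1)^{2d+1}$ with $f(\xi^*) = 2 \cdot 2^k t^d - 2^{k+1} t^d = 0$, while $\partial f/\partial x_{2d+1}(\xi^*) = -d(\xi_{2d+1}^*)^{d-1} \neq 0$, so $\xi^*$ is a non-singular zero. The main obstacle I anticipate is the Fourier-to-geometry identification $J = \sigma_\infty$: one must treat the singular part of $\{f=0\}\cap[0,1]^{2d+1}$ (where $x_{2d+1} = 0$ and both $f_d$ blocks vanish) with care when passing to the limit $\eta\to 0$ in the coarea formula. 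The tail bound and the explicit construction of the non-singular zero are both routine.
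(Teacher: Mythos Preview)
Your treatment of the tail bound $J(\mu) = J + O(\mu^{-(d+1)/d})$ matches the paper's exactly: both simply integrate the estimate $(\ref{e:four1})$ over $|\gamma| \geq \mu$.

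For the positivity $J > 0$, your argument is correct but takes a genuinely different route from the paper. The paper does not invoke mollifiers or the coarea formula; instead it exploits the special shape of $f$ by substituting $\alpha = \alpha_{2d+1}^d$ in the inner integral (so that $d\alpha_{2d+1} = \tfrac{1}{d}\alpha^{1/d-1}\,d\alpha$), and then applies Fourier's integral theorem on a finite interval (the Dirichlet-kernel argument, as in Davenport's book, Chapter~4) to obtain the closed expression
\[
J \;=\; \frac{1}{d}\int_{\{0 < f_d(\underline{\alpha}_1) + f_d(\underline{\alpha}_2) < 1\}\cap[0,1]^{2d}} \bigl(f_d(\underline{\alpha}_1) + f_d(\underline{\alpha}_2)\bigr)^{\frac{1}{d}-1}\, d\underline{\alpha}_1\, d\underline{\alpha}_2,
\]
from which $J>0$ is immediate since the domain is a non-empty open set and the integrand is strictly positive on it. This sidesteps precisely the obstacle you flag: because the variable $x_{2d+1}$ is eliminated explicitly, there is no need to analyse the level set near the singular locus $\{\nabla f = 0\}$, nor to justify passing the $\eta\to 0$ limit through a surface integral whose density $1/|\nabla f|$ blows up there. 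Your approach is more general --- it would apply to any form with $I_0\in L^1$ and a non-singular real zero in the open cube --- whereas the paper's substitution is tailored to the monomial $x_{2d+1}^d$ but is cleaner and entirely elementary for this particular $f$.
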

\begin{proof}
The convergence of $J(\mu)$ to $J$ as $\mu \rightarrow \infty$ at the rate asserted follows from (\ref{e:four1}). To show that $J>0$, we imitate \cite[Chapter 4]{DBook}. Writing $\underline{\alpha}_1$ for $(\alpha_1, \ldots, \alpha_{d})$ and $\underline{\alpha}_2$ for $(\alpha_{d+1}, \ldots, \alpha_{2d})$,
\begin{equation*}
J=\lim_{\lambda\rightarrow\infty}\int_{-\lambda}^{\lambda}\left(\int_{[0,1]^{2d+1}}e(\gamma (f_d(\underline{\alpha}_1)+f_d(\underline{\alpha}_2)-\alpha_{2d+1}^{d})\,d\underline{\alpha}_1\,d\underline{\alpha}_2\,d\alpha_{2d+1}\right)d\gamma.
\end{equation*}
Since $\int_{0}^{1}e(-\gamma \alpha_{2d+1}^{d})\,d\alpha_{2d+1}=\int_{0}^{1}\frac{1}{d}\alpha^{\frac{1}{d}-1}e(-\gamma \alpha)\,d\alpha$ and $\int_{-\lambda}^{\lambda}e(\gamma \mu)\,d\gamma=\sin(2\pi\lambda\mu)/\pi\mu$ for any real numbers $\lambda$ and $\mu$,
\begin{equation*}
J=\frac{1}{d}\lim_{\lambda\rightarrow\infty}\int_{[0,1]^{2d+1}}\alpha^{\frac{1}{d}-1}\left(\frac{\sin (2\pi \lambda (f_d(\underline{\alpha}_1)+f_d(\underline{\alpha}_2)-\alpha))}{\pi (f_d(\underline{\alpha}_1)+f_d(\underline{\alpha}_2)-\alpha)}\right)d\underline{\alpha}_1\,d\underline{\alpha}_2\,d\alpha.
\end{equation*}
Putting $\beta=f_d(\underline{\alpha}_1)+f_d(\underline{\alpha}_2)-\alpha$ so that $\alpha=f_d(\underline{\alpha}_1)+f_d(\underline{\alpha}_2)-\beta$,
\begin{equation*}
J=\frac{1}{d}\lim_{\lambda\rightarrow\infty}\int_{-1}^{2^{\lfloor d/2 \rfloor +1}}\phi(\beta)\frac{\sin (2\pi \lambda \beta)}{\pi \beta}\,d\beta
\end{equation*}
where
\begin{equation*}
\phi(\beta)=\frac{1}{d}\int_{\{\beta <f_d(\underline{\alpha}_1)+f_d(\underline{\alpha}_2)<\beta+1\}\cap [0,1]^{2d}}
(f_d(\underline{\alpha}_1)+f_d(\underline{\alpha}_2)-\beta)^{\frac{1}{d}-1}\,d\underline{\alpha}_1\,d\underline{\alpha}_2.
\end{equation*}
Fourier's integral theorem for a finite interval is applicable and gives
\begin{equation*}
J=\phi(0)=\int_{\{0<f_d(\underline{\alpha}_1)+f_d(\underline{\alpha}_2)<1\}\cap [0,1]^{2d}}
(f_d(\underline{\alpha}_1)+f_d(\underline{\alpha}_2))^{\frac{1}{d}-1}\,d\underline{\alpha}_1\,d\underline{\alpha}_2.
\end{equation*}
Since the region of integration is a non-empty open set and the integrand is positive throughout the region of integration, $J>0$.
\end{proof}
\Section{The Singular Series}
\label{singularseries}
\noindent From the definition of ${\mathfrak{S}}(Q)$, (\ref{e:three3}), 
\begin{equation} \label{e:five1}
{\mathfrak{S}}(Q)=\sum_{q\leq Q}S(q)
=\sum_{q\leq Q} q^{-2d-1}\sum_{\substack{a=1\\(a,q)=1}}^{q}S_1(q,a)^2S_2(q,a)
\end{equation}
where$$S_1(q,a)=\sum_{x_1, \ldots, x_{d}=1}^{q} e \left(\frac{a}{q}f_d(x_{1}, \ldots, x_{d})\right),\mbox{ }
S_2(q,a)=\sum_{x = 1}^{q} e\left( -\frac{a}{q}x^{d}\right)$$
and $$S(q)=q^{-2d-1}\sum_{\substack{a=1\\(a,q)=1}}^{q}S_1(q,a)^2S_2(q,a).$$
\noindent $S(q)$ is a multiplicative function of $q$.
In fact, if $(q_1,q_2)=(a,q_1q_2)=1$ and we choose $a_1,a_2$ so that $a_2q_1+a_1q_2=1$
then for $i=1,2
$\begin{equation} \label{e:five2}
S_i(q_1q_2,a)=S_i(q_1q_2,a_1q_2+a_2q_1)=S_i(q_1,a_1)S_i(q_2,a_2).
\end{equation}
This follows from the fact that as $r_1$ and $r_2$ run through
a complete (or reduced) set of residues modulo $q_1$ and $q_2$ respectively, $r_2q_1+r_1q_2$ runs through
a complete (or reduced) set of residues modulo $q_1q_2$.
Thus it suffices to study $S_i(q,a)$ when $q$ is a prime power and $(a,q)=1$. For a prime power $p^l$ and $p\nmid a$ we have
\begin{equation} \label{e:five3}
S_1(p^l,a)=\sum_{u=1}^{p^l}N_{d-2}(p,l,u)\left(\sum_{y=1}^{p^l}e\left(\frac{auy^2}{p^l}\right)\sum_{z=1}^{p^l}e\left(\frac{auz^2}{p^l}\right)\right).
\end{equation}
where
\begin{equation*}
N_{d-2}(p,l,u)=\#\{(x_1, \ldots, x_{d-2}): 1\leq x_1, \ldots, x_{d-2} \leq p^{l}, f_{d-2}(x_{1}, \ldots, x_{d-2})\equiv u \,(\mbox{mod }p^l)\}.
\end{equation*}
For a fixed $u$ with $p^j\|u$, writing $u= p^ju_1$ and dividing the sum over $y$ (or $z$) into $p^j$ subsums by grouping $p^{l-j}$ consecutive terms
together we get
\begin{equation*}
\sum_{y=1}^{p^l}e\left(\frac{auy^2}{p^l}\right)
=\sum_{z=1}^{p^l}e\left(\frac{auz^2}{p^l}\right)
=\sum_{v_1=1}^{p^j}\sum_{v_2=1}^{p^{l-j}}
e\left(\frac{au_1(p^{l-j}v_1+v_2)^2}{p^{l-j}}\right)=p^j\sum_{v_2=1}^{p^{l-j}}
e\left(\frac{au_1v_2^2}{p^{l-j}}\right).
\end{equation*}
The sum over $v_2$ is a quadratic Gauss sum modulo $p^{l-j}$ and its value is well known (e.g., see \cite[Chapter 9]{MontgomeryVaughan1}). We find that
\begin{equation*}
\sum_{y=1}^{p^l}e\left(\frac{auy^2}{p^l}\right)\sum_{z=1}^{p^l}e\left(\frac{auz^2}{p^l}\right)=\left\{\begin{array}{ll}
p^{2l} & \mbox{if } j=l\\
p^{l+j} & \mbox{if } p \equiv 1 \, (\mbox{mod }4)\\
(-1)^{l+j}p^{l+j} & \mbox{if } p \equiv 3 \, (\mbox{mod }4)\\
0 & \mbox{if } p=2,\,  j=l-1\\
2\cdot \sqrt{-1}\cdot 2^{l+j} & \mbox{if } p=2,\, j\leq l-2
\end{array}
=G(p,l,j)
\right.,
\end{equation*}
say, depends on $p, l$ and $j$ but not on $a$ or $u_1$. Thus, from (\ref{e:five3}),
\begin{equation} \label{e:five4}
S_1(p^l,a)=p^{2l}M_{d-2}(p,l,l)+\sum_{j=0}^{l-1}M^{*}_{d-2}(p,l,j)G(p,l,j)
\end{equation}
where, for $d\geq 3$, $l\geq 1$, $0\leq j \leq l,$
\begin{equation*}
M_{d-2}(p,l,j):=\#\{(x_1, \ldots, x_{d-2}): 1\leq x_1, \ldots, x_{d-2} \leq p^{l},\, p^j|f_{d-2}(x_{1}, \ldots, x_{d-2})\}.
\end{equation*}
and, for $d\geq 3$, $l\geq 1$, $0\leq j \leq l-1$,
\begin{eqnarray*}
M^{*}_{d-2}(p,l,j)&:=&\#\{(x_1, \ldots, x_{d-2}): 1\leq x_1, \ldots, x_{d-2} \leq p^{l},\, p^j\|f_{d-2}(x_{1}, \ldots, x_{d-2})\}\\
&=&M_{d-2}(p,l,j)-M_{d-2}(p,l,j+1).
\end{eqnarray*}
The equation (\ref{e:five4}) also holds for $d=1$ and $d=2$ if we define
\begin{equation*}
M_{-1}(p,l,j)=0 \mbox{ for } 0\leq j\leq l, \, M_{-1}^{*}(p,l,j)=0 \mbox{ for } 0\leq j\leq l-1,
\end{equation*}
\begin{equation*}
f_0=1,\, M_0(p,l,j)=\left\{
\begin{array}{ll}
1 \mbox{ if } j=0\\
0 \mbox{ if } 1\leq j\leq l
\end{array} \right.
\mbox{ and }
M_0^{*}(p,l,j)=\left\{
\begin{array}{ll}
1 \mbox{ if } j=0\\
0 \mbox{ if } 1\leq j\leq l-1
\end{array} \right..
\end{equation*}
It is clear that
\begin{equation*}
M_1(p,l,j)= p^{l-j} \mbox{ for } 0\leq j\leq l
\mbox{ and }
M_1^{*}(p,l,j)=p^{l-j}(1-p^{-1}) \mbox{ for } 0\leq j\leq l-1.
\end{equation*}
The treatment of the singular series is trivial in cases $d=1, 2, 3$ now; we dispose them off. 
When $d=1$ we have
\begin{equation} \label{e:fived1}
S_1(p^l,a)=0;\mbox{ } S_1(q,a)=S_2(q,a)=S(q)= 1 \mbox{ if } q=1, \, 0 \mbox{ if } q> 1;\mbox{ } {\mathfrak S}=1.
\end{equation}
When $d=2$ we have
\begin{equation} \label{e:fived2}
S_1(p^l,a)=G(p,l,0);\mbox{ } |S_1(q,a)|\leq 2q;\mbox{ }|S_2(q,a)|\leq \sqrt{2q};\mbox{ } |S(q)|\leq 4\sqrt{2}q^{-\frac{3}{2}}.
\end{equation}
When $d=3$ we have
\begin{equation} \label{e:fived3}
S_1(p^l,a)|< (l+1)p^{2l};\mbox{ } |S_1(q,a)|< q^2d(q);\mbox{ }|S_2(q,a)|\ll q^{2/3};\mbox{ } |S(q)|\ll q^{-\frac{4}{3}+\varepsilon}.
\end{equation}
\indent To deal with the cases $d\geq 4$, we first calculate $M_2(p,l,j)=\#\{(w,x): 1\leq w, x \leq p^{l},\, p^j|w^2+x^2\}$ for $1\leq j \leq l$. We do so by dividing the set $\{(w,x): 1\leq w, x \leq p^{l},\, p^j|w^2+x^2\}$ according to the highest power of $p$ dividing $w$. Thus, for $1\leq j \leq l$,
\begin{eqnarray*}
M_2(p,l,j)&=&\#\{(w,x): 1\leq w, x \leq p^{l},\,p^{\lfloor \frac{j+1}{2} \rfloor}|w,\, p^j|w^2+x^2\}\\
&&+\sum_{m=0}^{\lfloor \frac{j-1}{2} \rfloor}
\#\{(w,x): 1\leq w, x \leq p^{l},\,p^m\|w,\, p^j|w^2+x^2\}\\
&=&p^{2l-2\lfloor \frac{j+1}{2} \rfloor}+\sum_{m=0}^{\lfloor \frac{j-1}{2} \rfloor}
\#\{(w',x'): 1\leq w', x' \leq p^{l-m},\,p\nmid w',\, p^{j-2m}|w'^2+x'^2\}.
\end{eqnarray*}
Since $-1$ has two square roots modulo $p^{j-2m}$ if $p\equiv 1 \pmod{4}$, one square root modulo $p^{j-2m}$ if $p^{j-2m}=2$, and no square root in other cases, the $m^{th}$ term in the sum equals $p^{l-m}(1-\frac{1}{p})\cdot 2\cdot p^{(l-m)-(j-2m)}=2p^{2l-j}(1-\frac{1}{p})$  if $p\equiv 1 \pmod{4}$, $2^{2l-j-1}$ if $p^{j-2m}=2$ and 
zero otherwise. Thus, for $1\leq j \leq l$,
$$M_2(p,l,j)=
\left\{\begin{array}{ll}
p^{2l-2\lfloor \frac{j+1}{2} \rfloor}+2 \lfloor \frac{j+1}{2} \rfloor p^{2l-j}(1-\frac{1}{p}) & \mbox{if } p \equiv 1 \, (\mbox{mod }4)\\
p^{2l-2\lfloor \frac{j+1}{2} \rfloor} & \mbox{if } p \equiv 3 \, (\mbox{mod }4)\\
2^{2l-j} & \mbox{if } p=2
\end{array}
\right..$$
Hence, for $0\leq j\leq l-1$,
$$M^{*}_2(p,l,j)=
\left\{\begin{array}{ll}
(j+1)p^{2l-j}(1-\frac{1}{p})^2 & \mbox{if } p \equiv 1 \, (\mbox{mod }4)\\
p^{2l-j}(1-\frac{1}{p^2}) & \mbox{if } p \equiv 3 \, (\mbox{mod }4) \mbox{ and $j$ is even} \\
0 & \mbox{if } p \equiv 3 \, (\mbox{mod }4) \mbox{ and $j$ is odd}\\
2^{2l-j-1} & \mbox{if } p=2
\end{array}
\right..$$
Thus for $0\leq j \leq l-1$ we have $M^{*}_2(2,l,j)=2^{2l-1-j}$, $M^{*}_2(p,l,j)<p^{2l-j}$ if $p \equiv 3(\mbox{mod }4)$, and $M^{*}_2(p,l,j)<(j+1)p^{2l-j}$ for all $p$. Also, $M_2(p,l,l)\leq p^{l}$ if $p=2$ or $p \equiv 3(\mbox{mod }4)$, and $M_2(p,l,l)<(l+1)p^{l}$ for all $p$. To estimate $S_1(q, a)$ we consider the cases of even $d$ and odd $d$ separately.\\
\indent Now suppose that $d$ is even and $d\geq 4$, say $d=2k$ and $k\geq 2$. Then, for $0\leq j \leq l-1$,
\begin{equation*}
M^{*}_{2k-2}(p,l,j)=\sum_{\substack{0\leq j_i\leq l-1\\ j_1+\ldots +j_{k-1}=j}} \prod_{i=1}^{k-1}M^{*}_2(p,l,j_i)
\end{equation*}
Thus
\begin{eqnarray*}
M^{*}_{2k-2}(2,l,j) &=& \sum_{\substack{0\leq j_i\leq l-1\\ j_1+\ldots +j_{k-1}=j}} \prod_{i=1}^{k-1}2^{2l-1-j_i}\\
&=& \sum_{\substack{0\leq j_i\leq l-1\\ j_1+\ldots +j_{k-1}=j}} 2^{(k-1)(2l-1)-j}\\
& \leq & l^{k-1}2^{(k-1)(2l-1)-j} \leq l^{k-1}2^{2kl-2l-1-j}
.\end{eqnarray*}
If $p \equiv 3(\mbox{mod }4)$, then
\begin{eqnarray*}
M^{*}_{2k-2}(p,l,j)&< &\sum_{\substack{0\leq j_i\leq l-1\\ j_1+\ldots +j_{k-1}=j}} \prod_{i=1}^{k-1}p^{2l-j_i}\\
&=& \sum_{\substack{0\leq j_i\leq l-1\\ j_1+\ldots +j_{k-1}=j}} p^{2(k-1)l-j}\\
& \leq & l^{k-1}p^{2(k-1)l-j}=l^{k-1}p^{2kl-2l-j}
.\end{eqnarray*}
If $p \equiv 1(\mbox{mod }4)$, then
\begin{eqnarray*}
M^{*}_{2k-2}(p,l,j)&<&\sum_{\substack{0\leq j_i\leq l-1\\ j_1+\ldots +j_{k-1}=j}} \prod_{i=1}^{k-1}(j_i+1)p^{2l-j_i}\\
&=& p^{2(k-1)l-j}\sum_{\substack{0\leq j_i\leq l-1\\ j_1+\ldots +j_{k-1}=j}} \prod_{i=1}^{k-1}(j_i+1)\\
& < & l^{k-1}p^{2(k-1)l-j}\sum_{\substack{0\leq j_i\leq l-1\\ j_1+\ldots +j_{k-1}=j}}1\\
& < & l^{k-1}l^{k-1}p^{2(k-1)l-j}=l^{2k-2}p^{2kl-2l-j}
.\end{eqnarray*}
Dividing the $(2k-2)$-tuples $(x_1, \ldots, x_{2k-2})$ with $p^l|f_{2k-2}(x_1, \ldots, x_{2k-2})$ into the ones with one of the factors $x_{2i-1}^2+x_{2i}^2$ divisible by $p^l$ and the ones with none of the factors $x_{2i-1}^2+x_{2i}^2$ divisible by $p^l$ we see that for any prime $p$
\begin{eqnarray*}
M_{2k-2}(p,l,l)&\leq &\binom{k-1}{1} M_2(p,l,l)\cdot (p^{2l})^{k-2}+\sum_{\substack{0\leq j_i\leq l-1\\ j_1+\ldots +j_{k-1}\geq l}} \prod_{i=1}^{k-1}M^{*}_2(p,l,j_i)\\
& < &(k-1)(l+1)p^l\cdot p^{2kl-4l}+l^{2k-2}p^{2kl-2l-l}\\
& < &(l+1)^{2k-2}p^{2kl-3l}
.\end{eqnarray*}
(The last inequality follows since $(k-1)(l+1)+l^{2k-2}\leq (k-1)(2l)+l^{2k-2}=\binom{2k-2}{1}\cdot l+l^{2k-2}<(l+1)^{2k-2}.$)
Thus from (\ref{e:five4}),
\begin{equation*}
|S_1(p^l,a)|<(l+1)(l+1)^{2k-2}p^{2kl-l}=(p^l)^{2k-1}d(p^l)^{2k-1}.
\end{equation*}
Thus from (\ref{e:five2}) for even $d\geq 4$ and $(a,q)=1,$
\begin{equation} \label{e:five6}
|S_1(q,a)|\leq q^{2k-1}d(q)^{2k-1} \ll q^{2k-1+\varepsilon}=q^{d-1+\varepsilon}.
\end{equation}
\indent Now suppose that $d$ is odd and $d\geq 5$, say $d=2k+1$ and $k\geq 2$. Note that there is one value of $x_1$ modulo $p^l$ that is divisible by $p^l$ while for $0\leq j \leq l-1$ there are $p^{l-j}(1-p^{-1})$ values of $x_1$ modulo $p^l$ that are exactly divisible by $p^j$. Hence, for $0\leq j \leq l-1$,
\begin{equation*}
M^{*}_{2k-1}(p,l,j)=\sum_{\substack{0\leq j_i\leq l-1\\ j_0+\ldots +j_{k-1}=j}} p^{l-j_{0}}(1-p^{-1})\prod_{i=1}^{k-1}M^{*}_2(p,l,j_i).
\end{equation*}
Thus
\begin{eqnarray*}
M^{*}_{2k-1}(2,l,j) &=& \sum_{\substack{0\leq j_i\leq l-1\\ j_0+\ldots +j_{k-1}=j}} 2^{l-j_{0}}(1-2^{-1})\prod_{i=1}^{k-1}2^{2l-1-j_i}\\
&=& \sum_{\substack{0\leq j_i\leq l-1\\ j_0+\ldots +j_{k-1}=j}} \frac{1}{2}\cdot 2^{l+(k-1)(2l-1)-j}\\
& \leq & l^{k}2^{l+2kl-2l-(k-1)-j-1} \leq l^{k}2^{2kl-l-j-2}
.\end{eqnarray*}
If $p \equiv 3(\mbox{mod }4)$, then
\begin{eqnarray*}
M^{*}_{2k-1}(p,l,j)&<&\sum_{\substack{0\leq j_i\leq l-1\\ j_0+\ldots +j_{k-1}=j}} p^{l-j_{0}}(1-p^{-1})\prod_{i=1}^{k-1}p^{2l-j_i}\\
&=&\sum_{\substack{0\leq j_i\leq l-1\\ j_0+\ldots +j_{k-1}=j}} (1-p^{-1})p^{(2k-1)l-j}\\
&<&l^{k}p^{(2k-1)l-j}=l^{k}p^{2kl-l-j}
.\end{eqnarray*}
If $p \equiv 1(\mbox{mod }4)$, then
\begin{eqnarray*}
M^{*}_{2k-1}(p,l,j)& < &\sum_{\substack{0\leq j_i\leq l-1\\ j_0+\ldots +j_{k-1}=j}} p^{l-j_{0}}(1-p^{-1})\prod_{i=1}^{k-1}(j_i+1)p^{2l-j_i}\\
&<&(1-p^{-1})p^{(2k-1)l-j}\sum_{\substack{0\leq j_i\leq l-1\\ j_0+\ldots +j_{k-1}=j}} \prod_{i=1}^{k-1}(j_i+1)\\
&<&l^{k-1}p^{(2k-1)l-j}\sum_{\substack{0\leq j_i\leq l-1\\ j_0+\ldots +j_{k-1}=j}}1\\ &<&l^{k}l^{k-1}p^{(2k-1)l-j}=l^{2k-1}p^{2kl-l-j}
.\end{eqnarray*}
Dividing the $(2k-1)$-tuples $(x_1, \ldots, x_{2k-1})$ with $p^l|f_{2k-2}(x_1, \ldots, x_{2k-1})$ into the ones with $x_1$ divisible by $p^l$, the ones with one of the factors $x_{2i}^2+x_{2i+1}^2$ divisible by $p^l$ and the ones with none of the factors of $f_{2k-1}$ divisible by $p^l$, we see that for any prime $p$
\begin{eqnarray*}
M_{2k-1}(p,l,l)&\leq & 1\cdot p^{(2k-2)l}+\binom{k-1}{1} M_2(p,l,l)\cdot p^{(2k-3)l}\\
& &+\sum_{\substack{0\leq j_i\leq l-1\\ j_0+\ldots +j_{k-1}\geq l}} p^{l-j_{0}}(1-p^{-1})\prod_{i=1}^{k-1}M^{*}_2(p,l,j_i)\\
&<&p^{2kl-2l}+(k-1)(l+1)\cdot p^{2kl-2l}+l^{2k-1}p^{2kl-l-l}\\
&<& (l+1)^{2k-1}p^{2kl-2l}
.\end{eqnarray*}
(The last inequality follows since $1+(k-1)(l+1)+l^{2k-2}\leq 1+(k-1)(2l)+l^{2k-2}<1+\binom{2k-1}{1}\cdot l+l^{2k-1}<(l+1)^{2k-1}.$)
Thus from (\ref{e:five4}),
\begin{equation*}
|S_1(p^l,a)|<(l+1)(l+1)^{2k-1}p^{2kl}=(p^l)^{2k}d(p^l)^{2k}.
\end{equation*}
Thus from (\ref{e:five2}) for odd $d\geq 5$ and $(a,q)=1,$
\begin{equation} \label{e:five7}
|S_1(q,a)|\leq q^{2k}d(q)^{2k} \ll q^{2k+\varepsilon}=q^{d-1+\varepsilon}.
\end{equation}\\
\indent It is well known from the study of Waring's problem that for $(a,q)=1$ (See \cite{DBook} or \cite{VaughanBook}),
\begin{equation*}
|S_2(q,a)|\ll q^{1-\frac{1}{d}}.
\end{equation*}
Thus, from (\ref{e:fived1})-(\ref{e:five7}) and (\ref{e:five1}),
\begin{equation}  \label{e:five5}
|S(q)|\ll q^{-1-\frac{1}{d}+\varepsilon}.
\end{equation}
\begin{lem} \label{l:five1}
The series \begin{equation*}
{\mathfrak{S}}=\sum_{q=1}^{\infty}S(q)
\end{equation*} converges
absolutely, ${\mathfrak{S}}>0$, and for any $Q\geq 1$ we have
\begin{equation*}
{\mathfrak{S}}(Q)= {\mathfrak{S}}+O(Q^{-\frac{1}{d}+\varepsilon}).
\end{equation*}
\end{lem}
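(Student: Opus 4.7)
The plan is to derive all three assertions from the pointwise bound $|S(q)| \ll q^{-1-1/d+\varepsilon}$ of (5.5) together with the multiplicativity of $S(q)$ recorded in (5.2). Fixing $\varepsilon$ small enough that $1 + 1/d - \varepsilon > 1$, the estimate yields
$$\sum_{q > Q} |S(q)| \ll \sum_{q > Q} q^{-1-1/d+\varepsilon} \ll Q^{-1/d+\varepsilon},$$
which simultaneously gives absolute convergence of $\mathfrak{S} = \sum_{q=1}^{\infty} S(q)$ and the quantitative approximation $\mathfrak{S}(Q) = \mathfrak{S} + O(Q^{-1/d+\varepsilon})$ by tail comparison.

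For positivity, the strategy is the standard one. By multiplicativity, $\mathfrak{S}$ factors as an Euler product
$$\mathfrak{S} = \prod_p \chi(p), \qquad \chi(p) := \sum_{l=0}^{\infty} S(p^l),$$
and the same bound applied to prime powers yields $|\chi(p) - 1| \ll p^{-1-1/d+\varepsilon}$, so the product converges absolutely and only finitely many factors fall outside $(1/2, 3/2)$. It therefore suffices to verify $\chi(p) > 0$ for every $p$. The standard Fourier identity on $\mathbb{Z}/p^L\mathbb{Z}$ gives
$$\sum_{l=0}^{L} S(p^l) = p^{-2dL} \#\{\mathbf{z} \in (\mathbb{Z}/p^L)^{2d+1} : f(\mathbf{z}) \equiv 0 \pmod{p^L}\},$$
so $\chi(p)$ is the $p$-adic density of zeros of $f$ and it is enough to exhibit a single nonsingular $p$-adic zero.

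For odd $p$, take $\mathbf{x}^\ast = (\mathbf{v}, \mathbf{0}, 1)$ with $\mathbf{v}$ chosen from the recursive structure $f_d = (x_{d-1}^2 + x_d^2) f_{d-2}$ so that $f_d(\mathbf{v}) = 1$ (e.g.\ $\mathbf{v} = (1, 0, 1, 0, \ldots, 1, 0)$ for even $d$ and $\mathbf{v} = (1, 1, 0, 1, 0, \ldots, 1, 0)$ for odd $d$). At $\mathbf{x}^\ast$ the derivative $\partial f / \partial x_1$ equals $2$ (even $d$) or $1$ (odd $d$), hence a unit modulo any odd $p$, and Hensel's lemma lifts $\mathbf{x}^\ast$ to a $\mathbb{Z}_p$-zero of $f$, giving $\chi(p) > 0$. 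The main obstacle is the prime $p = 2$ in the even-$d$ case, where every variable appears squared inside $f$ and so every partial derivative of $f$ at every test point of this kind is even, rendering the naive form of Hensel's lemma inapplicable. I would resolve this by working directly with the explicit local densities already computed in Section 5: combining the formulas (5.4) and $M_2^\ast(2, l, j) = 2^{2l-j-1}$ with the evaluation of the quadratic Gauss sums at $p = 2$ gives a closed-form expression for $\chi(2)$ as a geometric-type sum, whose positivity can then be checked by direct calculation. This sidesteps the singularity of $f$ modulo $2$ and completes the proof that $\mathfrak{S} > 0$.
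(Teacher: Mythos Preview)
Your treatment of absolute convergence and the tail bound via $|S(q)|\ll q^{-1-1/d+\varepsilon}$ is exactly what the paper does. The positivity argument is also built on the same idea---exhibit a nonsingular zero and invoke Hensel locally---but you manufacture an unnecessary difficulty at $p=2$ for even $d$. The point you are missing is that your test point $\mathbf{x}^\ast$ is an \emph{exact} integer zero of $f$, so $v_2(f(\mathbf{x}^\ast))=+\infty$, and the strong form of Hensel's lemma (which only requires $v_p(f(\mathbf{x}^\ast))>2\,v_p(\partial f/\partial x_i(\mathbf{x}^\ast))$ for some $i$) applies even when the gradient is divisible by $p$; here $\partial f/\partial x_1(\mathbf{x}^\ast)=2$, so the zero lifts to $\mathbb{Z}_2$ and $\chi(2)>0$ with no further work. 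This is precisely the mechanism behind the Davenport Chapter~17 argument that the paper invokes: a single nonsingular \emph{rational} zero forces $\chi(p)>0$ for every prime simultaneously, so the paper simply exhibits such a zero and is done. Your proposed direct evaluation of $\chi(2)$ from the formulas of Section~5 is therefore unnecessary; it is also only sketched, and would be laborious to execute since one would additionally need exact values of the $d$th-power Gauss sums $S_2(2^l,a)$, not just the bound used in the paper.
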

\begin{proof} The convergence at the asserted rate follows from (\ref{e:five5}). The argument in \cite[Chapter 17]{DBook} applies to forms of any degree and shows that the positivity of ${\mathfrak{S}}$ would follow from the existence of a non-singular zero of the form $f.$ The point $(x_1, \ldots, x_{2d+1})$ with $x_1=x_3=\ldots =x_{2 \lfloor (d-1)/2 \rfloor+1}=x_{2d+1}=1,$ and the rest of the $x_i$ equal to zero is a non-singular zero of $f$.
\end{proof}
\Section{Completion of the Proof of Theorem 1}
\label{completion}
\noindent From (\ref{e:two2}) and lemmas \ref{l:two2}, \ref{l:three2}, \ref{l:four1}, \ref{l:five1} we get
\begin{eqnarray*}
R(0;P)&=&P^{d+1}(\mathfrak{S}+O(P^{-\frac{\delta}{d}+\varepsilon}))(J+O(P^{-\frac{d+1}{d}\delta}))+O(P^{d+5\delta})+O(P^{d+1+\varepsilon-\frac{\delta}{2^{d-1}}})\\
&=&P^{d+1}\mathfrak{S}J+O(P^{d+1+\varepsilon-\frac{\delta}{2^{d-1}}}+P^{d+5\delta}).
\end{eqnarray*}
Taking $\delta=\frac{2^{d-1}}{1+5\cdot 2^{d-1}}$ completes the proof of Theorem 1.\\\\
\begin{small} {\bf Acknowledgments.} I would like to thank Prof. Robert C. Vaughan for suggesting me the case $d=4$ of this problem and Sean Prendiville for asking me if I could do something similar for all $d$.\end{small}
\\\\

\end{document}